\newtheorem{theorem}{Theorem}
\newtheorem{lemma}{Lemma}
\newtheorem{conjecture}{Conjecture}
\begin{document}

\title[$k$-additive uniqueness of squares for multiplicative functions]{$k$-additive uniqueness of the set of squares\\ for multiplicative functions}
\author{Poo-Sung Park}
\address{Department of Mathematics Education, Kyungnam University, Changwon, Republic of Korea}
\email{pspark@kyungnam.ac.kr}

\thanks{This work was supported by ????}

\keywords{multiplicative function, $k$-additive uniqueness set, functional equation}

\maketitle

\begin{abstract}
P. V. Chung showed that there are many multiplicative functions $f$ which satisfy $f(m^2+n^2) = f(m^2)+f(n^2)$ for all positive integers $m$ and $n$. In this article, we show that if more than $2$ squares in the additive condition are involved, then such $f$ is uniquely determined. That is, if a multiplicative function $f$ satisfies
\[
f(a_1^2 + a_2^2 + \dotsb + a_k^2) = f(a_1^2) + f(a_2^2) + \dotsb + f(a_k^2)
\]
for arbitrary positive integers $a_i$, then $f$ is the identity function. In this sense, we call the set of all posotive squares a \emph{$k$-additive uniqueness set} for multiplicative functions.
\end{abstract}

\section{Introduction}

In 1992, Claudia Spiro \cite{Spiro} called a set $E$ an \emph{additive uniqueness set} for a set $S$ of arithmetic functions if $f \in S$ is determined uniquely to be the identity function by the additive condition $f(m+n) = f(m)+f(n)$ for arbitrary $m,n \in E$. She proved that the set of primes is an additive uniqueness set for the set of multiplicative functions which do not vanish at some prime.

Many mathematicians have generalized her theorem. Fang \cite{Fang} showed that the additive condition for primes can be changed to three primes. Also, Dubickas and \v{S}arka \cite{D-S} extended the result to the arbitrary many primes. That is, the condition can be changed to 
\[
f(p_1+p_2+\dotsb+p_k) = f(p_1) + f(p_2) + \dotsb + f(p_k)
\]
for all primes $p_i$'s when $k \ge 2$. In their result, let us call the additive condition \emph{$k$-additivity} and the set of primes a \emph{$k$-additive uniqueness set}.

Some mathematicians have studied other additive uniqueness sets. Chung classified all multiplicative functions which are $2$-additive on positive squares \cite{Chung}. If $S$ is the set of completely multiplicative nonnegative functions, then the set of positive squares is a $2$-additive uniqueness set for $S$. But, it is not a $2$-additive uniqueness set for the set of multiplicative functions. Similarly, Ba\v{s}i\'c classified all arithmetic functions $f$ satisfying $f(m^2+n^2) = f(m)^2+f(n)^2$ \cite{Basic}. He showed that there exists a function $2$-additive on positive squares other than the identity function although the functions are restricted to multiplicative functions. The author generalized Bas\v{s}i\'c's result to
\[
f(a_1^2 + a_2^2 + \dotsb + a_k^2) = f(a_1)^2 + f(a_2)^2 + \dotsb + f(a_k)^2
\]
for all positive integers $a_i$'s and showed that such $f$ is determined uniquely to be the identity function when $k \ge 3$ \cite{Park}.

In this article, we show that the set of positive squares is $k$-additive uniqueness set for multiplicative functions when $k \ge 3$:

\begin{theorem}
Fix $k \ge 3$. The set of positive squares is the $k$-additive uniqueness set for multiplicative functions. That is, if a multiplicative function $f$ satisfies
\[
f(a_1^2 + a_2^2 + \dotsb + a_k^2) = f(a_1^2) + f(a_2^2) + \dotsb + f(a_k^2)
\]
for arbitrary positive integers $a_i$, then $f$ is the identity function.
\end{theorem}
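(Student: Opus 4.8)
The plan is to prove $f(n)=n$ for every positive integer $n$ by exploiting two features simultaneously: that a representation $n=a_1^2+\dots+a_k^2$ with all $a_i\ge 1$ forces each summand to satisfy $a_i^2\le n-(k-1)<n$, and that $f$ is multiplicative. Since $f$ is multiplicative and not identically zero, $f(1)=1$, and setting $a_1=\dots=a_k=1$ gives $f(k)=k\,f(1)=k$. The basic mechanism is then that whenever $n$ is a sum of exactly $k$ positive squares, the functional equation expresses $f(n)$ as a sum of values of $f$ at strictly smaller perfect squares; as soon as those values are known to be correct, $f(n)=\sum a_i^2=n$ follows at once.

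Write $R$ for the set of integers that are sums of exactly $k$ positive squares. First I would run an induction on the elements of $R$ in increasing size using the mechanism above. For $k\ge 5$ this is favorable, since it is classical that only finitely many integers lie outside $R$; hence only a finite, explicitly checkable list of exceptional prime powers remains. The serious difficulty appears for $k\in\{3,4\}$, where there are \emph{infinite} families outside $R$: for $k=3$ the residue class $n\equiv 7\pmod 8$ (and the classes $4^a(8b+7)$) is missed, while for $k=4$ the families $2\cdot 4^a$, $6\cdot 4^a$, $14\cdot 4^a$ are missed.

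The non-$R$ prime powers I would handle by multiplicativity. Given a prime power $p^j\notin R$, I would choose a small multiplier $m$ coprime to $p$ with $mp^j\in R$; then $f(mp^j)=mp^j$ is available from the first step, and $f(mp^j)=f(m)f(p^j)$ forces $f(p^j)=p^j$ once $f(m)=m$ is known. For an odd prime power one may take $m=2$, since $2p^j\equiv 2\pmod 4$ avoids the three-square obstruction; for $k=4$ the genuinely infinite family of non-$R$ prime powers is the odd powers of $2$, and for $k=3$ it also includes the primes $p\equiv 7\pmod 8$. I expect the main structural obstacle to be exactly here: the summand squares feeding the induction over $R$ can themselves lie outside $R$ (for $k=3$, already $4$, $16$, $25$, $64$ are missing), so the $R$-induction and the multiplicative lifts are mutually dependent, and since $mp^j>p^j$ these lifts cannot sit inside a naive increasing induction. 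The delicate point is to order the deductions so that the two devices do not become circular.

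The second difficulty is the seeding. The smallest exceptional prime powers — above all the powers of $2$, which for $k=3$ are thoroughly entangled with one another because their natural representations reuse the very value one is trying to determine — must be pinned down by hand. I expect this to reduce to solving a small system drawn from several explicit representations together with multiplicativity, in which a spurious non-identity solution (for instance one forcing $f(4)=0$) appears and must be discarded by producing one further representation that contradicts it. Once the base values, the induction over $R$, and the multiplicative lifting of the exceptional prime powers are all in place, every $n$ either lies in $R$, or splits through a coprime factorization already handled, or is an exceptional prime power lifted into $R$; hence $f(n)=n$ for all $n$, and $f$ is the identity.
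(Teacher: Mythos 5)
Your outline matches the paper's broad strategy (seed small values by hand, induct over integers representable as $k$ positive squares, lift the exceptional prime powers by multiplicativity), and for $k\ge 4$ the plan is essentially sound since the exceptional sets are then a finite list together with the families $2\cdot4^m$, $6\cdot4^m$, $14\cdot4^m$, all of which your coprime-multiplier device handles. The genuine gap is in the $k=3$ case, where you conflate ``not a sum of three squares'' with ``not a sum of three \emph{positive} squares.'' The first set is exactly $4^a(8b+7)$, but the second is strictly larger: it also contains all $4^a$, $25\cdot4^a$, and sporadic values such as $5,10,13,37,58,85,130,\dots$, and no unconditional classification of this larger set is known (the standard list is complete only under GRH). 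Your proposed fix --- take $m=2$ for an odd prime power, since $2p^j\equiv 2\pmod 4$ ``avoids the three-square obstruction'' --- only guarantees that $2p^j$ is a sum of three squares, possibly with a zero summand; it already fails at $p^j=5$, since $10$ is not a sum of three positive squares. So the infinite family of prime powers that are sums of three squares but not of three positive squares is left untreated, and it cannot be treated by citing a classification that does not exist.

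The paper closes exactly this hole with three ingredients you would need to supply. First, separate lemmas proving $f(2^n)=2^n$, $f(3^n)=3^n$, $f(5^n)=5^n$ directly from tailored representations (e.g.\ $9\cdot2^{2m}=2^{2m}+2^2(2^m)^2+2^2(2^m)^2$), which dispose of the powers of $2$ and $5$ without any representability input. Second, Hurwitz's theorem that the only \emph{perfect squares} not expressible as three positive squares are $4^s$ and $25\cdot4^s$, which handles a prime power of the form $n=a^2$. Third, for a prime power $n=a^2+b^2$ with $a,b>0$ that is not a sum of three positive squares, the identity $25n=(5a)^2+(3b)^2+(4b)^2$ combined with $f(25n)=25f(n)$ and multiplicativity applied to $(5a)^2$, $(3b)^2$, $(4b)^2$ (peeling off the powers of $5$, $3$, $2$ via the lemmas and reducing the cofactor below $n$) forces $f(n)=n$. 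This is also how the paper resolves the circularity you flag: the summands $(5a)^2$ etc.\ exceed $n$, but their $f$-values are computed by multiplicativity from strictly smaller arguments plus the prime-power lemmas, not by the induction on $R$. Without these ingredients, your argument does not close for $k=3$.
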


This is the first example that a $3$-additive uniqueness set is not a $2$-additive uniqueness set.

The proof is similar to that of the author's generalization \cite{Park} of Ba\v{s}i\'c. But, since the $k$-additivity condition in Theorem 1 produces relations about $f(n^2)$ instead of $f(n)^2$, the proof is more difficult. It is known that the numbers inexpressible as sums of $k$ positive squares are infinite when $k = 3,4$ and are finite when $k \ge 5$.

We divide the proof into four sections: \S2 for $k=3$, \S3 for $k=4$, \S4 for $k=5$, and \S5 for $k \ge 6$.

\section{Proof for $3$-additivity}

Assume that a multiplicative function $f$ is $3$-additive on positive squares. That is, $f(a^2+b^2+c^2) = f(a^2)+f(b^2)+f(c^2)$ for arbitrary positive integers $a, b, c$.

\begin{lemma}
$f(n)=n$ for $1 \le n \le 12$ and $n=25$.
\end{lemma}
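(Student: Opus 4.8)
The plan is to bootstrap from small, explicit sum-of-three-squares identities, using multiplicativity to pin down $f$ on primes and prime powers one value at a time. The starting point is to exploit the fact that $f$ is multiplicative, so $f(1)=1$ (since $f$ is not identically zero, as the functional equation at $a=b=c=1$ gives $f(3)=3f(1)$, and multiplicativity forces $f(1)\in\{0,1\}$, with $f(1)=0$ making $f\equiv 0$, which fails the equation). Thus $f(1)=1$ and immediately $f(3)=3$.

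First I would march upward by choosing representations $n=a^2+b^2+c^2$ in which every summand is either $1$ or a value of $f$ already determined, so that the three-squares identity expresses $f(n)$ (or $f(n^2)$) in terms of known quantities. For instance $2=1^2+1^2+0$ is unavailable (we need positive integers), so $2$ must be reached through a square: $4=1^2+1^2+\sqrt{2}^2$ fails, but $6=2^2+1^2+1^2$ gives $f(6)=f(4)+2f(1)=f(4)+2$, and $9=2^2+2^2+1^2$ gives $f(9)=2f(4)+1$, while $4=f(4)$ is itself obtained from, say, $4+4+4=12=2^2+2^2+2^2$ yielding $f(12)=3f(4)$, combined with another representation of $12$. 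The key mechanism is that each integer $n$ expressible as a sum of three squares in two essentially different ways produces a linear equation among the $f$-values, and by selecting these carefully one solves for $f(4),f(9),f(25),\dots$ and then for $f(2),f(5),f(7),f(11)$ via multiplicativity (e.g.\ $f(4)=f(2)^2$ is \emph{not} available since $f$ is only multiplicative, not completely multiplicative, so $f(4)$ and $f(2)$ are a priori independent and must each be solved).

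The heart of the argument is therefore a finite but delicate system: I would tabulate all representations of the integers up to $12$ (and $25$) as sums of three positive squares, write down the resulting linear relations, and verify that together with the constraint $f(p^e)$ being a free parameter for each prime power they force $f(n)=n$ on this range. Concretely, relations such as $f(25)=f(9)+f(16)+\cdots$ from $25=3^2+4^2+0$ (again needing a positive-square substitute, so $25=4^2+3^2+0$ is replaced by genuine three-positive-square forms like $25=4^2+2^2+\sqrt5^2$ — which fails, forcing use of $27=5^2+1^2+1^2$ and $26=4^2+3^2+1^2$, etc.) interlock to determine everything. The reason $25$ appears in the statement is surely that $f(25)$ cannot be obtained from $f(5)$ alone and needs its own identity, so I expect the lemma is proved by explicitly closing the loop: get $f(5)$ from a representation, get $f(25)$ independently, and check consistency.

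The main obstacle will be the bookkeeping of which small integers have enough distinct three-square representations to yield independent equations, complicated by the positivity requirement (no zero summands), which removes the cleanest identities like $n=n+0+0$. Numbers like $7,15,23$ that are sums of three squares only in limited ways, and numbers $\equiv 7\pmod 8$ that are \emph{not} sums of three squares at all, create gaps where no direct relation exists; these values of $f$ must be reached indirectly through multiplicativity (for example $f(7)$ via $f(28)=f(4)f(7)$ and a three-square representation of $28$). I expect the proof to proceed by a carefully ordered induction on $n$, at each stage using either a direct three-square identity or a multiplicative factorization into already-known values, with the hardest cases being precisely those residues mod $8$ that obstruct the sum-of-three-squares representation.
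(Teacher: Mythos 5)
Your overall strategy---set up linear relations from sums of three positive squares, combine them with multiplicativity, and solve a finite system---is exactly the paper's approach. But the proposal never actually closes that system, and the few concrete identities you do offer would fail. First, your route to $f(4)$ via ``two essentially different'' representations of $12$ does not exist: $12=2^2+2^2+2^2$ is the \emph{only} representation of $12$ as a sum of three positive squares, and it yields $f(12)=3f(4)$, which is the same relation multiplicativity already gives ($f(12)=f(3)f(4)$), so no information is gained. Second, your suggested route to $f(7)$ through a three-square representation of $28$ is impossible: $28=4\cdot 7=4^1(8\cdot 0+7)$ is of the excluded form $4^a(8b+7)$ and is not a sum of three squares at all. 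The paper instead reaches $f(7)$ through $14=1^2+2^2+3^2$ (giving $f(2)f(7)=1+f(4)+f(9)$) and $21=1^2+2^2+4^2$ (giving $3f(7)=1+f(4)+f(16)$), and closes the whole system with the seven equations coming from $6,9,11,14,18,21,22$; one genuinely needs that many, since the equations are nonlinear (products like $f(2)f(11)$ appear) and a spurious solution with $f(2)=2/3$ must be eliminated. Merely asserting that the relations ``interlock to determine everything'' is the entire content of the lemma, and it is the part you have not done.

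Two smaller points. Your justification of $f(1)=1$ is flawed: the identically zero function \emph{does} satisfy $f(a^2+b^2+c^2)=f(a^2)+f(b^2)+f(c^2)$, so it is not excluded by the functional equation; it is excluded because the standard definition of a multiplicative function requires $f(1)=1$ (equivalently, $f\not\equiv 0$). On the other hand, your treatment of $25$ is on the right track and matches the paper: $f(25)$ is obtained from $27=5^2+1^2+1^2=3^2+3^2+3^2$, which gives $2+f(25)=3f(9)=27$, and then $f(5)$ follows from $30=1^2+2^2+5^2$ together with $f(30)=f(2)f(3)f(5)$.
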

\begin{proof}
It is trivial that $f(3) = 3$ from $f(1)=1$. Solving the system of equations
\begin{align*}
f(6) 
&= f(2)\,f(3) = 3f(2) \\*
&= f(1^2+1^2+2^2) = f(1)+f(1)+f(4) = 2+f(4) \\
f(9)
&= f(1^2+2^2+2^2) = 1+2f(4) \\*
f(11)
&= f(1^2+1^2+3^2) = 2+f(9) \\*
f(14)
&= f(2)\,f(7) \\*
&= f(1^2+2^2+3^2) = 1+f(4)+f(9) \\
f(18)
&= f(2)\,f(9) \\*
&= f(1^2+1^2+4^2) = 2+f(16) \\
f(21)
&= f(3)\,f(7) = 3f(7) \\*
&= f(1^2+2^2+4^2) = 1+f(4)+f(16) \\
f(22)
&= f(2)\,f(11) \\*
&= f(2^2+3^2+3^2) = f(4) + 2f(9),
\end{align*}
we obtain that
\begin{align*}
&f(2)=2, &&f(3)=3, &&f(4)=4, &&f(7)=7, \\
&f(9)=9, &&f(11)=11, &&f(16)=16.
\end{align*}

From the equalities
\begin{align*}
f(27)
&= f(3^2+3^2+3^2) = 3f(9) = 27 \\*
&= f(1^2+1^2+5^2) = 2+f(25)
\end{align*}
we obtain that $f(25) = 25$. Then, the equalities
\begin{align*}
f(30)
&= f(2)\,f(3)\,f(5) = 6f(5) \\*
&= f(1^2+2^2+5^2) = 5+f(25) = 30
\end{align*}
yields $f(5)=5$. Also, $f(8)=8$ follows from the equalities
\begin{align*}
f(24)
&= f(3)\,f(8) = 3f(8) \\*
&= f(2^2+2^2+4^2) = f(4)+f(4)+f(16) = 24.
\end{align*}

Thus, we are done.
\end{proof}

\begin{lemma}\label{lem:2-3-5}
$f(2^n) = 2^n$, $f(3^n) = 3^n$, $f(5^n) = 5^n$ for all positive integer $n$.
\end{lemma}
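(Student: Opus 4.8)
The plan is to prove the three families $f(2^n)=2^n$, $f(3^n)=3^n$, $f(5^n)=5^n$ separately, each by strong induction on $n$, handling even and odd exponents in turn. Lemma~1 supplies the base cases ($f(2^n)=2^n$ for $n\le4$, $f(3^n)=3^n$ for $n\le3$ including $f(27)=27$, and $f(5^n)=5^n$ for $n\le2$) together with the fixed values $f(3)=3$, $f(7)=7$, $f(9)=9$, $f(11)=11$, $f(27)=27$, which will play the role of ``known constants'' in the reductions. The guiding principle is that whenever $p^{2m}$, or a convenient multiple of it, can be written as a sum of three positive squares each of whose $f$-value is already known, the $3$-additivity relation pins down the one missing value. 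I must keep in mind that the only summands I may treat as known are products of $2,3,5$ with controlled exponents: primes such as $7,11$ may occur in the total (to the first power), but never as a summand, since $f(49)$ and $f(121)$ are not yet available.

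For the even exponents I would use, once $m$ is large enough that the base cases no longer apply, the identities
\begin{align*}
(2^m)^2+(2^{m-1})^2+(2^{m-2})^2 &= 3\cdot 7\cdot 2^{2m-4},\\
(3^m)^2+(3^{m-1})^2+(3^{m-1})^2 &= 11\cdot 3^{2m-2},\\
(5^m)^2+(5^{m-1})^2+(5^{m-1})^2 &= 27\cdot 5^{2m-2}.
\end{align*}
In each case the right-hand side factors over primes whose $f$-values are known, so its $f$-value is a known constant times $f$ of a strictly smaller even power, while the two smaller squares on the left are strictly smaller even powers as well. Substituting into the additive relation and invoking the induction hypothesis isolates $f(p^{2m})$ and returns exactly $p^{2m}$; for instance the first identity gives $f(2^{2m})=21\cdot 2^{2m-4}-2^{2m-2}-2^{2m-4}=2^{2m}$. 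Thus all even powers are settled first, each reduction referring only to smaller even powers of the same prime together with Lemma~1 constants.

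Once the even powers are known, the odd exponents follow from
\begin{align*}
(3^m)^2+(3^m)^2+(3^m)^2 &= 3^{2m+1},\\
(2^{m+1})^2+(2^m)^2+(2^m)^2 &= 3\cdot 2^{2m+1},\\
(2\cdot 5^m)^2+(4\cdot 5^{m-1})^2+(3\cdot 5^{m-1})^2 &= 5^{2m+1},
\end{align*}
which express $f(p^{2m+1})$ through even powers of $p$. The second identity calls upon the neighbouring even power $f(2^{2m+2})$, which is precisely why all even powers must be in hand before any odd one is treated. Applying $3$-additivity and the values already obtained yields $f(p^{2m+1})=p^{2m+1}$ in each case.

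I expect the powers of $2$ to be the main obstacle. A pure power of two is never a sum of three positive squares, and the naive remedies---taking $2^{2m}$ itself, or small odd multiples such as $3\cdot 2^{2m}$ or $5\cdot 2^{2m}$---either fail to be representable by three positive squares or force a summand that is again a power of two of exponent at least $2m$, i.e.\ a value no better known than the target. The real content of the first even-power identity is the choice of the multiplier $21=3\cdot7$, for which the representation uses only the three consecutive squares $2^{2m},2^{2m-2},2^{2m-4}$ and a total factoring over the already-known primes $2,3,7$. Checking that each displayed identity is a genuine sum of three \emph{positive} squares whose summands and total all lie in the ``known'' range, and arranging the even-before-odd ordering, is where the care lies; once these identities are in place the induction itself is routine.
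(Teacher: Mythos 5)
Your proof is correct and follows essentially the same strategy as the paper: induction on the exponent for each of the primes $2,3,5$ separately, splitting into even and odd exponents, and combining multiplicativity on a coprime multiple (e.g.\ $21\cdot2^{2m-4}$, $11\cdot3^{2m-2}$, $27\cdot5^{2m-2}$) with the $3$-additive expansion into smaller powers. The paper uses slightly different auxiliary identities in two spots ($9\cdot2^{2m}=(2^m)^2+2(2^{m+1})^2$ for even powers of $2$, and $30\cdot5^{2m}$ rather than your direct representation $5^{2m+1}=(2\cdot5^m)^2+(4\cdot5^{m-1})^2+(3\cdot5^{m-1})^2$), but these are interchangeable variants of the same argument.
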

\begin{proof}
It is easy to show $f(3^n)=3^n$ because $f(3)=3$, $f(3^2)=3^2$, and 
\begin{align*}
f(3^{2m+1}) &= f(3\cdot3^{2m}) = f\big((3^m)^2 + (3^m)^2 + (3^m)^2 \big)
= 3\,f(3^{2m})\\*
f(3^{2m+2}) &= f\big(9 (3^m)^2 \big) = f\big( (3^m)^2 + 2^2(3^m)^2 + 2^2(3^m)^2 \big) = 3^2\,f(3^{2m}).
\end{align*}

Now, consider $f(2^n)$. From the equalities
\begin{align*}
f(9\cdot2^{2m})
&= 9\,f(2^{2m}) \\*
&= f(2^{2m}+2^2(2^m)^2+2^2(2^m)^2) = f(2^{2m}) + 2f(2^{2m+2})
\end{align*}
we obtain that $f(2^{2m+2}) = 2^2\,f(2^{2m})$. Then, the following equality
\begin{align*}
f(6 \cdot 2^{2m})
&= f(3)\,f(2^{2m+1}) = 3\,f(2^{2m+1}) \\*
&= f\big( (2^m)^2 + (2^m)^2 + 2^2(2^m)^2 \big) = 2\,f(2^{2m}) + f(2^{2m+2})
\end{align*}
yields $f(2^{2m+1}) = 2\,f(2^{2m})$.

Similarly, from the equalities
\begin{align*}
f(27\cdot5^{2m})
&= f(27)\,f(5^{2m}) = 27\,f(5^{2m}) \\*
&= f\big( (5^m)^2 + (5^m)^2 + 5^2(5^m)^2 \big) = 2\,f(5^{2m}) + f(5^{2m+2}) \\
f(30\cdot5^{2m})
&= f(6)\,f(5^{2m+1}) = 6\,f(5^{2m+1}) \\*
&= f\big( (5^m)^2 + 2^2(5^m)^2 + 5^2(5^m)^2 \big) = 5\,f(5^{2m}) + f(5^{2m+2})
\end{align*}
we can conclude that $f(5^n) = 5^n$.
\end{proof}

\begin{lemma}[Hurwitz \cite{Hurwitz}, \protect{\cite[Chapter 6]{Grosswald}}]\label{lem:Hurwitz}
The only squares that are not sums of three positive squares are the integers $4^s$ and $25\cdot4^s$ with $s \ge 0$.
\end{lemma}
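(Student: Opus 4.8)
The plan is to derive this from the classical Gauss--Legendre three-square theorem, which states that a positive integer $n$ is a sum of three nonnegative squares if and only if $n$ is not of the form $4^s(8t+7)$. First I would observe that no perfect square has this excluded shape: if $N = 2^a m$ with $m$ odd, then $N^2 = 4^a m^2$ and the odd part satisfies $m^2 \equiv 1 \pmod 8$, which is never $\equiv 7 \pmod 8$. Hence every square is a sum of three nonnegative squares, and a square can fail to be a sum of three \emph{positive} squares only if each of its three-square representations is forced to contain a zero coordinate.

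Next I would strip off the powers of $4$. If $4 \mid n$ and $n = a^2 + b^2 + c^2$, then reducing modulo $4$ (where squares are $0$ or $1$) forces $a,b,c$ to be even, so $n/4 = (a/2)^2 + (b/2)^2 + (c/2)^2$; the converse holds by scaling. Therefore $n$ is a sum of three positive squares if and only if $n/4$ is, and iterating reduces the problem for $N^2 = 4^a m^2$ to the odd square $m^2$. Under this reduction the candidate exceptional families $4^s$ and $25 \cdot 4^s$ collapse precisely to the two odd squares $1$ and $25$, so it suffices to prove that these two are not sums of three positive squares while every other odd square is.

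The two exceptions fall to a finite check. For the positive direction I would count representations. Let $r_3(n)$ and $r_2(n)$ denote the numbers of ordered integer solutions of $x^2+y^2+z^2 = n$ and $x^2+y^2 = n$, respectively; since a positive square $m^2$ has exactly the two one-coordinate solutions $x = \pm m$, inclusion--exclusion on the vanishing coordinates shows that the number of all-nonzero solutions of $x^2+y^2+z^2=m^2$ equals $r_3(m^2) - 3r_2(m^2) + 6$. Thus $m^2$ is a sum of three positive squares exactly when $r_3(m^2) > 3 r_2(m^2) - 6$.

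The hard part will be establishing this inequality for all odd $m \ge 3$ with $m \ne 5$. The quantity $r_2(m^2)$ is elementary, being four times the excess of divisors of $m^2$ congruent to $1$ over those congruent to $3$ modulo $4$, but a lower bound for $r_3(m^2)$ requires the Gauss class-number formula for ternary representations. Comparing these two expressions of genuinely different arithmetic nature, and then clearing by hand the finitely many small $m$ for which the class-number estimate is not yet conclusive, is the delicate core; it is exactly here that one defers to Hurwitz and to Grosswald's account rather than reproving it.
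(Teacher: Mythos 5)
The paper offers no proof of this lemma at all: it is stated as a classical result and attributed to Hurwitz and to Grosswald's Chapter 6, so there is no internal argument to compare against. Measured on its own terms, your proposal gets all of the elementary reductions right: odd squares are $\equiv 1 \pmod 8$, so no square has the excluded form $4^s(8t+7)$ and every square is a sum of three nonnegative squares; the mod-$4$ descent correctly shows that for $4 \mid n$ being a sum of three positive squares is equivalent to the same statement for $n/4$, which collapses the problem to odd squares and the two candidates $1$ and $25$; and the inclusion--exclusion identity (all-nonzero solutions $= r_3(m^2) - 3r_2(m^2) + 6$) is correct, as is the finite verification that $1$ and $25$ have no representation by three positive squares.

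The genuine gap is that the entire positive content of the lemma --- that every odd square $m^2$ with $m \ne 1, 5$ satisfies $r_3(m^2) > 3r_2(m^2) - 6$, equivalently admits a representation with all three coordinates nonzero --- is exactly the step you defer. Everything you actually prove is the easy half (the exceptions are exceptions) plus a reformulation of the hard half; the reformulation does not by itself make the inequality accessible, since $r_3(m^2)$ is governed by class numbers of imaginary quadratic orders while $r_2(m^2)$ is a divisor sum, and no elementary congruence argument closes that comparison (sums of three squares hitting $1 \pmod 8$ can always a priori use a coordinate divisible by $4$, including zero). Since the paper itself only cites Hurwitz and Grosswald at this point, stopping where you stop is a defensible editorial choice, but you should state explicitly that your argument reduces the lemma to the cited class-number computation rather than presenting it as a proof with one "delicate" step remaining.
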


Now we prove the main theorem for $k=3$ by using induction. Suppose that $n > 12$ and $f(m)=m$ for all $m < n$. If $n$ is not a prime power, then $n = ab$ with $2 \le a,b < n$ and thus $f(n)=n$ by induction hypothesis. Thus, we may assume that $n$ is a prime power.

If $n$ is expressible as a sum of three positive squares, $f(n)=n$ by induction hypothesis. Otherwise, there are two cases:
\begin{enumerate}
\item if $n = a^2 + b^2 + c^2$, then at least one of $a,b,c$ should vanish, or
\item $n \ne a^2 + b^2 + c^2$ for any $a,b,c \in \mathbb{Z}$.
\end{enumerate}

Consider case (1). Assume $n = a^2+b^2$ for some positive $a$ and $b$. If $n$ is divisible by $5$, then $n$ should be a power of $5$. Thus, $f(n) = n$ by Lemma \ref{lem:2-3-5}.

Assume that $n$ is not divisible by $5$. If $a = 5^s t$ with $s \ge 0$ and $5 \nmid t$, then 
\[
f\big( (5a)^2 \big) = f( 5^{2s+2} t^2 ) = f(5^{2s+2})\,f(t^2) = 5^{2s+2} t^2 = (5a)^2
\]
since $t^2 < a^2 < n$ and $f(5^{2n+2}) = 5^{2n+2}$ by Lemma \ref{lem:2-3-5}. Similary, $f\big( (3b)^2 \big) = (3b)^2$ and $f\big( (4b)^2 \big) = (4b)^2$.

Note that $25$ and $n$ are relatively prime. So,
\begin{align*}
f(25n) 
&= f(25)\,f(n) = 25\,f(n) \\*
&= 
\left\{
\begin{array}{l}
f\big( (5a)^2 + (5b)^2 \big) = f\big( (5a)^2 + (3b)^2 + (4b)^2 \big) \\*
= f\big( (5a)^2 \big) + f\big( (3b)^2 \big) + f\big( (4b)^2 \big) \\*
= (5a)^2 + (3b)^2 + (4b)^2 = 25n
\end{array}
\right.
\end{align*}
and thus $f(n)=n$.

Now, if $n=a^2$, then we may assume that $n = 4^s$ with $s \ge 0$ by Lemma \ref{lem:Hurwitz}. It follows from Lemma \ref{lem:2-3-5} that $f(n)=n$.

Let us consider case (2). If $n$ is not expressible as a sum of three squares, then $n = 4^s(8t+7)$ with nonnegative integers $s$ and $t$. We may assume that $n = 8t+7$. Then, note that $2n = 2(8t+7) = 8(2t+1)+6$ is expressible as a sum of three squares. Thus, $f(2n) = 2n$. Since $n$ is odd, $f(2n)=f(2)\,f(n) = 2f(n)$.

From the above results, we can conclude that $f$ is the identity function.

\section{Proof for $4$-additivity}

In this section, assume that a multiplicative function $f$ is $4$-additive on positive squares. That is, $f(a^2+b^2+c^2+d^2) = f(a^2)+f(b^2)+f(c^2)+f(d^2)$ for arbitrary positive integers $a, b, c, d$.

Dubouis showed that which numbers are expressible as sums of positive squares as follows:
\begin{lemma}[Dubouis \cite{Dubouis}, \protect{\cite[Chapter 6]{Grosswald}}]\label{lem:Dubouis}
Every integer $n$ can be represented as a sum of $k$ positive squares except
\[
n = \begin{cases}
1, 3, 5, 9, 11, 17, 29, 41, 2\cdot4^m, 6\cdot4^m, 14\cdot4^m ~(m \ge 0) & \text{if } k = 4, \\
33 & \text{if } k = 5, \\
1, 2, \dots, k-1, k+1, k+2, k+4, k+5, k+7, k+10, k+13 & \text{if } k \ge 5.
\end{cases}
\]
\end{lemma}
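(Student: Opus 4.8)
As this is a classical theorem of additive number theory, in the present paper it is invoked as a black box and the citations to Dubouis and to Grosswald's monograph suffice; nonetheless I sketch how I would reconstruct a proof. I would take two facts for granted: Lagrange's four-square theorem (every nonnegative integer is a sum of four squares, zeros allowed) and the Legendre--Gauss three-square theorem (a nonnegative integer is a sum of three squares if and only if it is not of the shape $4^a(8b+7)$). A short case analysis on the latter yields the characterization of the integers that are sums of three \emph{positive} squares, of which Lemma~\ref{lem:Hurwitz} is exactly the restriction to perfect squares.

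Writing $R_k$ for the set of integers expressible as a sum of $k$ positive squares, the statement is governed by the recursion
\[
R_k = \bigcup_{t \ge 1} \bigl(R_{k-1} + t^2\bigr),
\]
because deleting the smallest part of a $k$-term representation of $n$ leaves a $(k-1)$-term representation of $n - t^2$, and appending $t^2$ reverses this. Equivalently, $n \in R_k$ iff $n - k$ is a sum of at most $k$ of the positive \emph{excesses} $3, 8, 15, 24, \dots$ (one excess $a^2 - 1$ for each part $a \ge 2$). For $n$ near the minimum only the excesses $3$ and $8$ are relevant, so the reachable values are exactly the numerical semigroup $\langle 3, 8\rangle$, whose gaps are $1,2,4,5,7,10,13$ (Frobenius number $13$).

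I would then treat the three ranges separately. For $k = 4$ I compute $R_4 = \bigcup_t (R_3 + t^2)$ from the three-positive-square characterization; here four slots are too few to absorb the obstruction $4^a(8b+7)$, and the infinite families $2\cdot 4^m, 6\cdot 4^m, 14\cdot 4^m$ emerge from a $4$-adic descent relating representations of $4n$ to those of $n$, while the sporadic values $1,3,5,9,11,17,29,41$ fall out of a bounded search. For $k \ge 6$ the slot constraint is harmless, so $n \in R_k$ precisely when $n - k \in \langle 3, 8\rangle$ (in the large range this is underwritten by the three-square theorem, which guarantees that some $n - t^2$ avoids $4^a(8b+7)$); the exceptions are then $1, \dots, k-1$ together with $k + 1, k + 2, k + 4, k + 5, k + 7, k + 10, k + 13$, i.e.\ $k$ plus the seven gaps above. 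The case $k = 5$ is identical except that the excess $28$ of $33$ needs six of the generators $3, 8, 15, 24$ and so cannot fit in five slots, leaving $33$ as an extra sporadic exception that vanishes once $k \ge 6$.

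The step I expect to be genuinely delicate is the $k = 4$ analysis, specifically pinning down that the infinite exceptional families are \emph{exactly} $2\cdot 4^m, 6\cdot 4^m, 14\cdot 4^m$. This calls for a careful descent modulo $8$ (and modulo $32$): one must show that each such number fails at every level of the power-of-$4$ tower, and simultaneously check that the scaling does not identify a representable number with a non-representable one---note that the naive equivalence $n \in R_4 \Leftrightarrow 4n \in R_4$ is \emph{false}, since $1 \notin R_4$ but $4 \in R_4$. Everything else reduces to a finite verification or to a direct application of the three-square theorem, so the difficulty is concentrated in this $4$-adic bookkeeping.
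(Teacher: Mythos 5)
The paper gives no proof of this lemma at all: it is quoted as a classical result of Dubouis with a pointer to Grosswald's monograph, so your decision to treat the citation as the proof is exactly what the paper does. Your supplementary sketch (the excess semigroup $\langle 3,8\rangle$ with gaps $1,2,4,5,7,10,13$ accounting for the $k \ge 5$ exceptions, the slot-count obstruction that makes $28 = 33 - 5$ need six generators and hence leaves $33$ exceptional only for $k=5$, and the $4$-adic descent needed to isolate the families $2\cdot4^m$, $6\cdot4^m$, $14\cdot4^m$ for $k=4$) is a sound outline of the standard argument, but it goes beyond anything the paper attempts.
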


\begin{lemma}
$f(n)=n$ for $n = 1,3,5,9,11,17,29,41$.
\end{lemma}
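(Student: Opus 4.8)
The plan is to mimic the bootstrapping argument of Lemma 1: combine the multiplicativity of $f$ with the $4$-additive relation on squares to build a small system of equations that pins down each target value. Since the listed integers are exactly (among) the numbers that are \emph{not} sums of four positive squares, the $4$-additivity cannot be applied to them directly; instead I would apply it to a suitably chosen \emph{coprime} multiple that \emph{is} such a sum, and then strip off the extra factor by multiplicativity. This forces me first to stockpile the values of $f$ on a handful of small squares, so the ordering of the deductions is what matters.

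First I would establish $f$ on small squares. From $4 = 1^2+1^2+1^2+1^2$ we get $f(4)=4$, and from $16 = 2^2+2^2+2^2+2^2$ we get $f(16) = 4f(4) = 16$. The representation $25 = 4^2 + 2^2 + 2^2 + 1^2$ then yields $f(25) = f(16)+2f(4)+1 = 25$. To reach $f(9)$ I would play two representations of $36$ against each other: $36 = 3^2+3^2+3^2+3^2$ gives $f(36) = 4f(9)$, while $36 = 5^2+3^2+1^2+1^2$ gives $f(36) = f(25)+f(9)+2$, and together these force $f(9) = 9$ (consistent with $f(36)=f(4)f(9)$). Next, $f(12)=f(3)f(4)=4f(3)$ together with $12 = 3^2+1^2+1^2+1^2$ gives $f(3)=3$, and $f(20)=f(4)f(5)=4f(5)$ together with $20 = 3^2+3^2+1^2+1^2$ gives $f(5)=5$. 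I would also record $f(100)=f(4)f(25)=100$ for later use.

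With these in hand the remaining primes fall out one at a time. For $11$, apply $4$-additivity to $44 = 5^2+3^2+3^2+1^2$ and use $f(44)=f(4)f(11)=4f(11)$ to get $f(11)=11$. For $17$, use $68 = 5^2+5^2+3^2+3^2$ together with $f(68)=4f(17)$. For $29$, use $145 = 10^2+5^2+4^2+2^2$ (all of whose square summands are already evaluated) with $f(145)=f(5)f(29)=5f(29)$. For $41$, use $205 = 10^2+10^2+2^2+1^2$ with $f(205)=f(5)f(41)=5f(41)$. The case $n=1$ is immediate from multiplicativity.

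The only genuine difficulty here is combinatorial rather than conceptual: for each target I must locate a coprime multiple that admits a representation as a sum of four positive squares \emph{all of whose summands are squares already evaluated}, and I must sequence the deductions so that no value is invoked before it has been secured (for instance $f(25)$ and $f(9)$ before $f(17)$, and $f(5)$ and $f(100)$ before $f(29)$ and $f(41)$). Once a consistent order and the right representations are fixed, every step collapses to a single linear equation in one unknown, exactly as in Lemma 1.
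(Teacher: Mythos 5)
Your proposal is correct and follows essentially the same strategy as the paper: bootstrap $f$ on small squares and then evaluate each listed $n$ by applying $4$-additivity to a coprime multiple of $n$ that is a sum of four positive squares with already-known summands (your representations $44=5^2+3^2+3^2+1^2$, $68=5^2+5^2+3^2+3^2$, $145=10^2+5^2+4^2+2^2$, $205=10^2+10^2+2^2+1^2$ all check out, and you correctly sequence the prerequisites). The only difference is cosmetic: the paper pins down $f(2),f(3),f(5),f(9)$ by solving a simultaneous system built from $f(10),f(12),f(15),f(18),f(20)$, whereas you isolate $f(9)$ first via the two representations of $36$ so that every subsequent step is a single linear equation in one unknown.
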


\begin{proof}
We have that $f(1) = 1$ and $f(4)=4$. Since
\begin{align*}
f(10)
&= f(2)\,f(5) \\*
&= f(1^2+1^2+2^2+2^2) = 2f(1)+2f(4) = 10 \\
f(12)
&= f(4)\,f(3) = 4f(3) \\*
&= f(1^2+1^2+1^2+3^2) = 3+f(9) \\
f(15)
&= f(3)\,f(5) \\*
&= f(1^2+1^2+2^2+3^2) = 6+f(9) \\
f(18)
&= f(2)\,f(9) \\*
&= f(1^2+2^2+2^2+3^2) = 9+f(9) \\
f(20)
&= f(4)\,f(5) = 4f(5) \\*
&= f(1^2+1^2+3^2+3^2) = 2+2f(9),
\end{align*}
we can deduce that
\begin{align*}
& f(2) = 2,
&& f(3)= 3,
&& f(5) = 5,
&& f(9) = 9.
\end{align*}

Also, we have that $f(16)=f(2^2+2^2+2^2+2^2)=16$. Then, $f(11) = 11$ from 
\[
f(22) = f(2)\,f(11) = f(1^2+1^2+2^2+4^2) = 22.
\] 
Similarly, we can deduce that $f(17)=17$, $f(29)=29$, and $f(41)=41$.
\end{proof}

We can compute $f(n)=n$ inductively for other $n$ except $2\cdot4^m$, $6\cdot4^m$, and $14\cdot4^m$. Thus, since $f(5\cdot2\cdot4^m)=5\cdot2\cdot4^m$ can be also computed inductively,
\begin{align*}
f(2\cdot4^m) &= 2\cdot4^m \\
f(6\cdot4^m) &= f(3)\,f(2\cdot4^m) = 6\cdot4^m \\
f(14\cdot4^m) &= f(7)\,f(2\cdot4^m) = 14\cdot4^m.
\end{align*}
 
We are done.

\section{Proof for $5$-additivity}

In this section, assume that a multiplicative function $f$ is $5$-additive on positive squares. That is, $f(a_1^2+a_2^2+a_3^2+a_4^2+a_5^2) = f(a_1^2)+f(a_2^2)+f(a_3^2)+f(a_4^2)+f(a_5^2)$ for arbitrary positive integers $a_i$'s.

By Lemma \ref{lem:Dubouis} every positive integer can be represented as sums of five positive squares except for $1, 2, 3, 4, 6, 7, 9, 10, 12, 15, 18, 33$.

\begin{lemma}
$f(n) = n$ for $1 \le n \le 16$.
\end{lemma}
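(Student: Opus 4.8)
The plan is to exploit multiplicativity together with the handful of integers in $[1,40]$ that are sums of five positive squares. By Lemma \ref{lem:Dubouis}, among $1 \le n \le 16$ only $5,8,11,13,14,16$ are representable, which via $f(1)=1$ immediately gives
\begin{align*}
f(5) &= 5f(1) = 5, & f(8) &= f(4)+4, & f(11) &= 2f(4)+3,\\
f(13) &= f(9)+4, & f(14) &= 3f(4)+2, & f(16) &= f(9)+f(4)+3,
\end{align*}
from the (essentially unique) representations $5=5\cdot1^2$, $8=2^2+4\cdot1^2$, $11=2\cdot2^2+3\cdot1^2$, $13=3^2+4\cdot1^2$, $14=3\cdot2^2+2\cdot1^2$, and $16=3^2+2^2+3\cdot1^2$. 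Everything therefore reduces to determining the two unknowns $f(4)$ and $f(9)$.

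The main obstacle is that no single representable integer isolates $f(4)$: each individual product relation only yields a quadratic in $f(4)$ carrying a spurious root (such as $f(4)=1$ or $f(4)=-1$), so I must combine several independent representations to eliminate it linearly. The idea is to take multiples of $4,8,9$ by coprime factors that happen to be sums of five positive squares. From $20=4\cdot5$ I get $f(20)=f(4)f(5)=5f(4)$, while $20=4^2+4\cdot1^2$ gives $f(20)=f(16)+4$; from $40=8\cdot5$ I get $f(40)=5f(8)=5f(4)+20$, while $40=6^2+4\cdot1^2$ gives $f(40)=f(36)+4=f(4)f(9)+4$; and from $36=4\cdot9$ I get $f(36)=f(4)f(9)$, while $36=4^2+2\cdot3^2+2\cdot1^2$ gives $f(36)=f(16)+2f(9)+2$. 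Writing $x=f(4)$, $y=f(9)$ and substituting $f(16)=x+y+3$, these three relations become $4x=y+7$, $xy=5x+16$, and $xy=x+3y+5$. Eliminating the product $xy$ from the last two leaves the linear relation $4x+11=3y$, which together with $y=4x-7$ forces $x=4$, hence $f(4)=4$ and $f(9)=9$. Back-substitution then yields $f(8)=8$, $f(11)=11$, $f(13)=13$, $f(14)=14$, and $f(16)=16$.

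It remains to recover the primes $2,3,7$ and the remaining composite values. I plan to use two carefully chosen representable multiples whose sum-of-squares expansions involve only the squares $4$ and $9$: from $35=3\cdot3^2+2\cdot2^2$ I get $f(35)=3f(9)+2f(4)=35$, and since $35=5\cdot7$ this gives $f(7)=7$; then $f(14)=f(2)f(7)=14$ yields $f(2)=2$. Likewise $21=2\cdot3^2+3\cdot1^2$ gives $f(21)=2f(9)+3=21$, and $21=3\cdot7$ forces $f(3)=3$. Finally, multiplicativity fills in the remaining $n\le16$, namely $f(6)=f(2)f(3)=6$, $f(10)=f(2)f(5)=10$, $f(12)=f(4)f(3)=12$, and $f(15)=f(3)f(5)=15$, completing the verification that $f(n)=n$ for $1\le n\le16$.
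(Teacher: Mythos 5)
Your proof is correct and follows essentially the same strategy as the paper: express the representable values $8,11,13,14,16$ in terms of the unknowns $f(4)$ and $f(9)$ via five-square representations, then pin those unknowns down by comparing multiplicative factorizations with further five-square representations of auxiliary integers. You use $20,36,40$ (and $35,21,14$ for the primes) where the paper's system uses $20,21,22,24,26$, but this is only a different choice of auxiliary equations, and your explicit linear elimination of the product $f(4)f(9)$ is if anything a cleaner presentation of the same computation.
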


\begin{proof}
We have that $f(1)=1$ and $f(5)=5$. Solving the system of equations
\begin{align*}
f(8) &= f(1^2+1^2+1^2+1^2+2^2) = 4+f(4) \\
f(11) &= f(1^2+1^2+1^2+2^2+2^2) = 3+2f(4) \\
f(13) &= f(1^2+1^2+1^2+1^2+3^2) = 4+f(9) \\
f(14) &= f(2)\,f(7) \\*
&= f(1^2+1^2+2^2+2^2+2^2) = 2+3f(4) \\
f(16) &= f(1^2+1^2+1^2+2^2+3^2) = 3+f(4)+f(9) \\
f(20) &= f(4)\,f(5) = 5f(4) \\*
&= f(1^2+1^2+1^2+1^2+4^2) = 4+f(16) \\
f(21) &= f(3)\,f(7) \\*
&= f(1^2+1^2+1^2+3^2+3^2) = 3+2f(9) \\
f(22) &= f(2)\,f(11) \\*
&= f(1^2+2^2+2^2+2^2+3^2) = 1+3f(4)+f(9) \\
f(24) &= f(8)\,f(3) \\*
&= f(1^2+1^2+2^2+3^2+3^2) = 2+f(4)+2f(9) \\
f(26) &= f(2)\,f(13) \\*
&= f(1^2+1^2+2^2+2^2+4^2) = 2+2f(4)+f(16),
\end{align*}
we obtain that $f(n)=n$ for $1 \le n \le 16$.
\end{proof}

It is clear that $f(18)=f(2)\,f(9)=18$ and $f(33) = f(3)\,f(11) = 33$. Since other numbers can be expressible as sums of $5$ positive squares, we can conclude that $f$ is the identity function by induction.

\section{Proof for $k$-additivity with $k \ge 6$}

Let $f_k$ be $k$-additive on positive squares. That is, $f_k(a_1^2+a_2^2+\dotsb+a_k^2) = f_k(a_1^2) + f_k(a_2^2) + \dotsb + f_k(a_k^2)$ for arbitrary positive integers $a_i$'s.

By Lemma \ref{lem:Dubouis} every integer can be expressible as sums of $k$ positive squares except for $1, 2, \dotsc, k-1, k+1, k+2, k+4, k+5, k+7, k+10, k+13$. Note that $f_k(1)=1$ and $f_k(k)=k$.

\begin{lemma}\label{lem:f_k(n)_for_n=4,9,16}
$f_k(n^2)=n^2$ for $n=2,3,4$.
\end{lemma}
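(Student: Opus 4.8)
The plan is to set $x = f_k(4)$, $y = f_k(9)$, $z = f_k(16)$ and to extract the relevant information in two stages: first a batch of \emph{linear} relations obtained by writing one integer as a sum of exactly $k$ positive squares in two different ways, and then a single \emph{nonlinear} input from multiplicativity to pin down the scale. For the linear stage, note that whenever $a_1^2 + \dotsb + a_k^2 = b_1^2 + \dotsb + b_k^2$ with the two multisets of squares drawn from $\{1,4,9,16\}$, the $k$-additivity forces $\sum f_k(a_i^2) = \sum f_k(b_i^2)$. Comparing six $2$'s against one $2$ and one $4$ (both representations of $k+18$) gives $z = 5x-4$; comparing three $3$'s against three $2$'s and one $4$ (both representing $k+24$) gives $3y = 8x-5$; and comparing four $2$'s, one $3$, one $4$ against a single $6$ (both representing $k+35$) gives $f_k(36) = 4x+y+z-5$. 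Each of these uses at most six nonunit squares, so all of them are available simultaneously for every $k \ge 6$, the remaining entries being filled with $1$'s.

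A key observation is that the linear stage can \emph{never} suffice on its own: every comparison of the above type is satisfied by the entire one-parameter family $f_k(n^2) = 1 + \lambda(n^2-1)$, since two representations with equal term count and equal sum automatically balance. This is exactly why multiplicativity must enter. Since $36 = 4 \cdot 9$ with $\gcd(4,9)=1$, we have $f_k(36) = f_k(4)\,f_k(9) = xy$. Substituting the linear relations $y = (8x-5)/3$ and $z = 5x-4$ into $xy = 4x + y + z - 5$ and clearing denominators yields
\begin{equation*}
8x^2 - 40x + 32 = 0, \qquad \text{i.e.} \qquad (x-1)(x-4) = 0,
\end{equation*}
so $f_k(4) \in \{1,4\}$, and correspondingly $(x,y,z) = (1,1,1)$ or $(4,9,16)$.

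It remains to discard the spurious root $x=1$, which I expect to be the main obstacle and the only genuinely nontrivial step. If $f_k(4)=1$, then the linear relations give $f_k(9) = f_k(16) = 1$, and the same collision technique propagates this to $f_k(n^2)=1$ for every $n$ (each increment $n^2-1$ is a nonnegative integer combination of the earlier increments $3,8,15,\dots$). Consequently $f_k(N) = k$ for \emph{every} $N$ that is a sum of $k$ positive squares. Choosing two coprime integers $a,b$ with $a$, $b$, and $ab$ all so expressible, which is possible for $k \ge 6$ by Lemma \ref{lem:Dubouis} once one avoids the finite exceptional set, gives $f_k(ab) = k$ while $f_k(ab) = f_k(a)\,f_k(b) = k^2$; since $k \ge 6$ this is a contradiction. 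Hence $x=4$, and therefore $f_k(4)=4$, $f_k(9)=9$, $f_k(16)=16$, as claimed.
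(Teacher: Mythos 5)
Your overall architecture is essentially the paper's. The three double representations you use are equivalent to the ones in the text (they yield the same relations $z=5x-4$, $3y=8x-5$, $f_k(36)=(35x-32)/3$), the single multiplicative input $f_k(36)=xy$ produces the identical quadratic $(x-1)(x-4)=0$, and your remark that the linear stage alone is satisfied by the whole family $1+\lambda(n^2-1)$ is a correct and clarifying explanation of why multiplicativity must be injected. Your endgame for discarding $x=1$ is a legitimate variant: the paper evaluates $f_k(k^2)=f_k\big((k-2)^2+(k-1)\cdot 2^2\big)$ to force $1=k$, while you force $k=k^2$ from coprime $a,b$ with $a$, $b$, $ab$ all sums of $k$ positive squares; both contradictions are sound once the propagation $f_k(n^2)=1$ for all $n$ is in hand.

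The genuine gap is precisely in that propagation, which you correctly identify as the crux but then dispatch with a reason that does not work. ``Each increment $n^2-1$ is a nonnegative integer combination of the earlier increments $3,8,15,\dots$'' is not the relevant criterion: a usable collision $n^2+\sum_{i=1}^{k-1}a_i^2=\sum_{j=1}^{k}b_j^2$ with all $a_i,b_j\le n-1$ amounts to writing $n^2-1$ as a \emph{difference} of at most $k$ and at most $k-1$ increments, so the number of summands is capped by $k$, which may be as small as $6$; an unbounded combination such as $7^2-1=48=16\cdot 3$ is useless. The clean repair --- and what the paper actually does --- is to take $n^2+5\cdot 1^2=(n-1)^2+(2n+4)$ and invoke Lemma \ref{lem:Dubouis} to write $2n+4$ as a sum of five positive squares, each automatically at most $(n-1)^2$ for $n\ge 5$. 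This fails exactly at $n=7$, where $2n+4=18$ is one of Dubouis's exceptions, and a separate identity is required there, e.g.\ $7^2+4\cdot 1^2+2^2=6^2+3^2+3^2+1^2+1^2+1^2$, or in your language $7^2-1=6\cdot(3^2-1)$, which does fit within $k\ge 6$ summands. Without a bound on the number of summands and without the $n=7$ case, the induction giving $f_k(n^2)=1$ for all $n$ is not established, and your final contradiction has nothing to stand on.
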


\begin{proof}
Using equalities
\begin{align*}
28 
&= 1^2 + 3^2 + 3^2 + 3^2 \\*
&= 2^2 + 2^2 + 2^2 + 4^2 \\
20 
&= 1^2 + 1^2 + 1^2 + 1^2 + 4^2 \\*
&= 2^2 + 2^2 + 2^2 + 2^2 + 2^2 \\
40
&= 1^2 + 1^2 + 1^2 + 1^2 + (2\cdot3)^2 \\*
&= 2^2 + 3^2 + 3^2 + 3^2 + 3^2,
\end{align*}
we obtain that two cases
\[
f_k(4)=f_k(9)=f_k(16)=1 \qquad\text{or}\qquad
f_k(4) = 4,
f_k(9) = 9,
f_k(16) = 16.
\]

Note that
\[
f_6(9) = f_6(1^2+1^2+1^2+1^2+1^2+2^2) = 5+f_6(4) 
\]
yields that $f_6(4)=4$, $f_6(9)=9$, and $f_6(16)=16$. But, we cannot find $k$-additivity relations between $f_k(4)$, $f_k(9)$, and $f_k(16)$ for general $k$. So we need another way to exclude $f_k(4)=f_k(9)=f_k(16)=1$.

Suppose $f_k(4)=f_k(9)=f_k(16)=1$. Let us compute $f(5^2)$. Note that $5^2+1^2+1^2+1^2+1^2+1^2-4^2 = 14$ can be expressible as a sum of $5$ positive squares $\le 4^2$ by Lemma \ref{lem:Dubouis}. Since $f_k(n^2)=n^2$ for $n \le 4$, the equality
\[
f_k(5^2+\underbrace{1^2+\dotsb+1^2}_{5\text{ summands}} + \underbrace{1^2+\dotsb+1^2}_{k-6\text{ summands}}) 
= f_k(4^2 + a_1^2 + \dotsb + a_5^2 + \underbrace{1^2+\dotsb+1^2}_{k-6\text{ summands}}) 
\]
with $1 \le a_i \le 4$ yields
\[
f_k(5^2) + (k-1) = f(4^2) + f(a_1^2) + \dotsb + f(a_5^2) + (k-6) = k
\]
and we can, thus, find $f_k(5^2)=1$.

Hence we obtain $f_k(n^2)=1$ for all $n$ inductively from the similar equality
\begin{align*}
n^2 + \underbrace{1^2+\dotsb+1^2}_{5\text{ summands}} = (n-1)^2 + a_1^2 + \dotsb + a_5^2
\end{align*}
with $1 \le a_i \le n-1$. But, we must be careful for $n=7$ since $7^2+5-6^2 = 18$ cannot be expressible as a sum of $5$ positive squares by Lemma \ref{lem:Dubouis}. So, in this case, we use another equality $7^2 + 1^2 + 1^2 + 1^2 + 1^2 + 2^2 - 6^2 = 21$. 

But, the determination $f_k(k^2) = f_k\big( (k-2)^2 \big) = f_k(4) = 1$ makes a contradiction:
\begin{align*}
1 = f_k(k^2) &= f_k\big( (k-2)^2 + (k-1)2^2 \big) \\*
&= f_k\big((k-2)^2\big) + (k-1)\,f_k(4) = 1 + (k-1) = k.
\end{align*}
This excludes the case $f_k(4)=f_k(9)=f_k(16)=1$. That is, we can determine that $f_k(2^2) = 2^2$, $f_k(3^2) = 3^2$, $f_k(4^2) = 4^2$.
\end{proof}

\begin{lemma}
$f_k(n^2)=n^2$ for all $n$.
\end{lemma}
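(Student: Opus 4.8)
The plan is to argue by strong induction on $n$, taking as base case the range $n \le 4$, which is already supplied by Lemma~\ref{lem:f_k(n)_for_n=4,9,16} together with $f_k(1)=1$. So I would suppose $n \ge 5$ and $f_k(m^2)=m^2$ for every $m<n$, and deduce $f_k(n^2)=n^2$.

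The engine of the induction is the elementary identity
\[
n^2 + \underbrace{1^2+\dots+1^2}_{5} = (n-1)^2 + \big(a_1^2+\dots+a_5^2\big),
\]
in which the five squares on the right must satisfy $a_1^2+\dots+a_5^2 = n^2 - (n-1)^2 + 5 = 2n+4$. First I would invoke Lemma~\ref{lem:Dubouis} to write $2n+4$ as a sum of exactly five positive squares; since each summand then obeys $a_i^2 \le 2n+4 < n^2$ for $n \ge 5$, we get $a_i < n$, so the induction hypothesis gives $f_k(a_i^2)=a_i^2$. Padding both sides of the identity with $k-6$ copies of $1^2$ turns it into a genuine $k$-additivity relation; applying $f_k$, cancelling the $(k-6)\,f_k(1)$ terms common to both sides, and using $f_k(1)=1$ with $f_k\big((n-1)^2\big)=(n-1)^2$, one is left with
\[
f_k(n^2) + 5 = (n-1)^2 + (2n+4),
\]
which simplifies immediately to $f_k(n^2)=n^2$.

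The only place this argument can fail is the decomposition step, and that is the main obstacle. By Lemma~\ref{lem:Dubouis} the number $2n+4$ fails to be a sum of five positive squares precisely when it lies in $\{6,10,12,18\}$, that is, when $n \in \{1,3,4,7\}$. The values $n \le 4$ are already covered by the base case, so the sole genuine exception is $n=7$, where $2\cdot 7 + 4 = 18$ is inadmissible. To dispatch it I would replace the block of five $1^2$'s on the left by $1^2+1^2+1^2+1^2+2^2$, turning the identity into
\[
7^2 + \big(1^2+1^2+1^2+1^2+2^2\big) = 6^2 + \big(a_1^2+\dots+a_5^2\big),
\]
whose target is $a_1^2+\dots+a_5^2 = 49+8-36 = 21 = 3^2+3^2+1^2+1^2+1^2$. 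Every summand here is below $7^2$, so the same cancellation yields $f_k(49)=49$. With this single exceptional case handled, the induction runs for all $n$, completing the proof.
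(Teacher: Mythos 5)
Your proof is correct and follows essentially the same route as the paper: induction from the base case of Lemma~\ref{lem:f_k(n)_for_n=4,9,16} via the identity $n^2+5\cdot 1^2=(n-1)^2+(2n+4)$ padded with $k-6$ ones, using Lemma~\ref{lem:Dubouis} to split $2n+4$ into five positive squares and treating $n=7$ (where $2n+4=18$ is inadmissible) by the alternate decomposition $7^2+1^2+1^2+1^2+1^2+2^2=6^2+21$. Your write-up is in fact more explicit than the paper's, which merely points back to the method of the previous lemma.
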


\begin{proof}
The method used in the proof of the Lemma \ref{lem:f_k(n)_for_n=4,9,16} shows that
\begin{align*}
&f_k(n^2) + (k-1)\,f_k(1^2) \\
&= f_k\big( (n-1)^2 \big) + f_k(a_1^2) + \dotsb + f_k(a_5^2) + (k-6)\,f_k(1^2)
\end{align*}
with $1 \le a_i \le n-1$. Thus, $f_k(n^2) = n^2$ is obtained inductively for all $n$.
\end{proof}

Now, let $n$ be an integer $\ge 2$. If $n$ is expressible as a sum of $k$ positive squares, we are done. So, assume that $n$ cannot be expressed as a sum of $k$ positive squares. If an integer $m$ is sufficiently large, $m$ is expressible as a sum of $k$ positive squares by Lemma \ref{lem:Dubouis}. So is $nm$. Thus, $f_k(m)=m$ and $f_k(nm) = nm$. If we choose $m$ to be relatively prime to $n$,
\begin{align*}
nm = f_k(nm) = f_k(n)\,f_k(m) = f_k(n)\,m
\end{align*}
and thus $f_k(n)=n$.

Therefore, proof completes.

%
%
%
%
%
%
%
%
%
%
%
%

\section*{Acknowledgment}

The author would like to thank East Carolina University for its support and hospitality.

\end{document}